\numberwithin{equation}{section}
\newtheorem{theorem}{Theorem}[section]
\newtheorem{lemma}[theorem]{Lemma}
\theoremstyle{remark}
\newtheorem{remark}{Remark}[section]
\theoremstyle{definition}
\newtheorem{definition}[theorem]{Definition}
\newtheorem{assumption}[theorem]{Assumption}
\newtheorem{example}[theorem]{Example}
\newcommand{\R}{\mathbb{R}}
\newcommand{\C}{\mathbb{C}}
\begin{document}

\title[Schr\"odinger operators with non-trapping magnetic fields]
{Non-trapping magnetic fields and Morrey-Campanato estimates for
Schr\"odinger operators}

\begin{abstract}
  We prove some uniform in $\epsilon$ a priori estimates for solutions of the
  equation
  \begin{equation*}
    (\nabla-iA)^2u-V(x)u+(\lambda\pm i\epsilon)u=f,
    \qquad
    \lambda\geq0,
    \quad
    \epsilon\neq0.
  \end{equation*}
  The estimates are obtained in terms
  of Morrey-Campanato norms, and can be used to prove absence of
  zero-resonances, in a suitable sense, for electromagnetic
  Hamiltonians. Precise conditions on the size of the
  \textit{trapping component} of the magnetic field and the
  non repulsive component of the electric field are given.
\end{abstract}

\date{\today}

\author{Luca Fanelli}
\address{Luca Fanelli: Universidad del Pais Vasco, Departamento de
Matem$\acute{\text{a}}$ticas, Apartado 644, 48080, Bilbao, Spain}
\email{luca.fanelli@ehu.es}

\subjclass[2000]{35J10, 35L05, 58J45.}

\keywords{electric potentials, magnetic potentials, virial
identities, Schr\"odinger operators, spectral theory}

\maketitle


\section{Introduction}\label{sec.introd}
In space dimension $n\geq3$, let us consider the electromagnetic
Schr\"odinger operator
\begin{equation}\label{eq:hamiltonian}
  H=-(\nabla-iA(x))^2+V(x);
\end{equation}
here $A=(A^1,\cdots A^n):\R^n\to\R^n$ is the magnetic potential, and
$V:\R^n\to\R$ is the electric potential. We denote by
\begin{equation*}
  \nabla_A=\nabla-iA,
  \qquad
  \Delta_A=\nabla_A^2.
\end{equation*}
In the theory of electromagnetic fields, a deep literature has been
produced on the study of electromagnetic Schr\"odinger Hamiltonians
\eqref{eq:hamiltonian}. There are indeed a lot of interesting
problems related to the properties of solutions of stationary and
evolutive equations described by these operators. The magnetic
potential $A$ is a mathematical construction which describes the
interaction of particles with an external magnetic field. The vector
field $A$ is standardly associated to a 1-form, whose differential
$B:=dA$ is the magnetic field, which is a physical object. We can
define analytically $B$ as the $n\times n$ anti-symmetric matrix
\begin{equation*}
  B=DA-(DA)^t,
  \qquad
  (DA)_{ij}=\frac{\partial A^i}{\partial x_j},
  \quad
  (DA)^t_{ij}=(DA)_{ji}.
\end{equation*}
In dimension $n=3$, the magnetic field $B$ is identified as
$B=\text{curl}A$, due to the isomorphism between 1-forms and
2-forms; this fact has to be interpreted in terms of the action
\begin{equation*}
  Bv=\text{curl}A\times v,
  \qquad
  \text{for all }v\in\R^3,
\end{equation*}
where the cross is the vectorial product on $\R^3$. We will always
consider smooth potentials $A,V\in\mathcal C^1$; actually, it is
possible to study the validity of the results of this paper for
rough potentials, but it is not in our aims. Moreover, in what
follows, we always assume:
\begin{assumption}\label{ass:H}
  the Hamiltonian $H$ is self-adjoint on $L^2(\R^n)$, with
  form-domain
  \begin{equation*}
    \mathcal D(H)=\{f\in
    L^2(\R^n):\int|\nabla_Af|^2+\int|V|\cdot|f|^2<\infty\}.
  \end{equation*}
\end{assumption}
Assumption \ref{ass:H} has several consequences: the spectrum
$\sigma(H)$ is real, and via Spectral Theorem we can perform the
functional calculus $g(H)$, for any Borel-measurable function $g$.
In particular, by the powers of the operator $H$ we can define the
distorted Sobolev norms
\begin{equation*}
  \|f\|_{\dot{\mathcal H}^s}=\|H^{\frac s2}f\|_{L^2}.
\end{equation*}
The validity of Assumption \ref{ass:H} requires local integrability
conditions on $A,V$, and the literature about it is complete. For
details, see the Leinfelder-Simader result in \cite{LS} and the book
\cite{CFKS}.

The aim of this paper is to prove uniform (in $\epsilon$) a priori
estimates for solutions of the resolvent equation
\begin{equation}\label{eq:helmholtz}
  -Hu(x)+(\lambda\pm i\epsilon)u(x)=f(x),
  \qquad
  \lambda\geq0,
  \quad
  \epsilon\neq0
\end{equation}
by direct techniques based on integration by parts. In the purely
electric case $A\equiv0$, we shall mention \cite{pv} as inspirator
of this multipliers technique (actually the subject there is the
Helmholtz equation, and the role of $V$ is played by the
rarefraction index $n(x)$). Since $\lambda\pm i\epsilon\notin\R$,
for any $f$ in $L^2$ there exists a unique $u\in L^2$ solution of
\eqref{eq:helmholtz}.

The integration by parts gives very precise informations about the
relevant quantities (related to the electromagnetic field) which
play a role in the spectral properties of $H$. It is of particular
interest the part concerning the magntic potential $A$. Let us give
the following definition.
\begin{definition}[non trapping magnetic fields]\label{def.bitau}
  Let us define by $B_\tau:\R^n\to\R^n$ the tangential component of the magnetic
  field $B$, given by
  \begin{equation*}
    B_\tau(x):=\frac{x}{|x|}B.
  \end{equation*}
  Observe that in dimension $n=3$ it coincides with
  \begin{equation*}
    B_\tau(x):=\frac{x}{|x|}\times\text{curl}A(x).
  \end{equation*}
  We say that $B$ is \textit{non-trapping} if $B_\tau=0$.
\end{definition}
The quantity $B_\tau$ was introduced in \cite{FV}, in which it is
proved that weak-dispersion for the magnetic Schr\"odinger and wave
equation holds, for example, for non-trapping potentials. Indeed, a
smallness condition on $B_\tau$ is sufficient there to prove that
some aspects of the free dynamics are preserved in presence of this
kind of fields. This is also what happens in the stationary case, as
we prove later in our main theorems. We give some examples of
non-trapping fields (see also \cite{FV}), in dimension $n=3$.
\begin{example}\label{ex.coulomb}
 Let us take
  \begin{equation}\label{eq.example}
    A=\frac1{x^2+y^2+z^2}(-y,x,0)=\frac{1}{x^2+y^2+z^2}(x,y,z)\times(0,0,1).
  \end{equation}
  One can easily check that
  \begin{equation*}
    \nabla\cdot A=0,
    \qquad
    B=-2\frac{z}{(x^2+y^2+z^2)^2}(x,y,z),
    \qquad
    B_\tau=0.
  \end{equation*}
  Another (more singular) example is the following:
  \begin{equation}\label{eq.example2}
    A=\left(\frac{-y}{x^2+y^2},\frac{x}{x^2+y^2},0\right)=
    \frac{1}{x^2+y^2}(x,y,z)\times(0,0,1).
  \end{equation}
  Here we have $B=(0,0,\delta)$, with $\delta$ denoting Dirac's delta function. Again we have $B_\tau=0$ .
\end{example}
\begin{example}\label{ex.biot}
  A natural generalization of the previous examples is the following
  one.
  Assume that $B=\text{curl}\,A:\R^3\to\R^3$ is known; if we fix the Coulomb gauge $\text{div}A=0$, then
  $A$ can be obtained by the {\it Biot-Savart}
  formula
  \begin{equation}\label{eq.BS}
    A(x)=\frac1{4\pi}\int\frac{x-y}{|x-y|^3}\times B(y)\,dy.
  \end{equation}
  Let us assume $B_\tau=0$, namely $x\times B(x)=0$; by
  \eqref{eq.BS} we have
  \begin{equation}\label{eq.BS2}
    A(x)=\frac x{4\pi}\times\int\frac{B(y)}{|x-y|^3}\,dy.
  \end{equation}
  Consequently, for the condition $B_\tau=0$ it is necessary $B(y)=
  g(y)\frac{y}{|y|}$, for some $g:\R^3\to\R$. Since we want
  $A\neq0$, $g$ has not to be radial. For example we
  consider
  \begin{equation*}
    g(y)=h\left(\frac{y}{|y|}\cdot\omega\right)|y|^{-\alpha},
  \end{equation*}
  for some fixed $\omega\in S^2$, where $h$ is homogeneous of degree
  0 and $\alpha\in\R$; as a consequence, the vector field $B$ is homogeneous of degree
  $-\alpha$. By \eqref{eq.BS2} we have
  \begin{equation}\label{eq.BS3}
    A(x)=\frac x{4\pi}\times\int\frac{h\left(\frac{y}{|y|}\cdot\omega\right)}
    {|x-y|^3|y|^\alpha}y\,dy.
  \end{equation}
  The potential $A$ is homogenous of degree $1-\alpha$, and by
  symmetry we have that $A(\omega)=0$. These examples can be easily extended to higher dimensions.
\end{example}
Before stating the main theorems, we need to introduce some
notations. For $f:\R^n\to\C$ we define the Morrey-Campanato norm as
\begin{equation*}
  |||f|||^2=\sup_{R>0}\frac1R\int_{|x|\leq R}|f|^2dx.
\end{equation*}
Moreover, we denote by $C(j)=\{x\in\R^n:2^j\leq|x|\leq2^{j+1}\}$,
\begin{equation*}
  N(f)=\sum_{j\in\mathbb
  Z}\left(2^{j+1}\int_{C(j)}|f|^2dx\right)^{\frac12},
\end{equation*}
and we easily notice the duality relation
\begin{equation*}
  \int fgdx\leq|||g|||\cdot N(f).
\end{equation*}
For any $p\geq1$, we also define
\begin{equation*}
  \|f\|_{L^p_rL^\infty(S_r)}=\left(\int_0^{+\infty}\sup_{|x|=r}|f(x)|^pdr\right)^{\frac1p}.
\end{equation*}
We are now ready to state our main results.
\begin{theorem}[3D-Morrey-Campanato estimates]\label{thm:main}
  Let $n=3$; let us assume that
  \begin{equation}\label{eq:btau}
    \||x|^{\frac32}B_\tau\|_{L^2_rL^\infty(S_r)}=C_1<\infty
  \end{equation}
  \begin{equation}\label{eq:vr}
    \||x|^2(\partial_rV)_+\|_{L^1_rL^\infty(S_r)}=C_2<\infty
  \end{equation}
  \begin{equation}\label{eq:v+}
    \|\langle x\rangle^{-1}|x|^2V_+\|_{L^1_rL^\infty(S_r)}=C_3<\infty,
  \end{equation}
  and moreover there exists $M\geq0$ such that
  \begin{equation}\label{eq:condition}
    \frac{\left(M+\frac12\right)^2}{M}C_1^2+2\left(M+\frac12\right)C_2<1.
  \end{equation}
  Assume, moreover, that $V$ satisfies the Hardy-type condition
  \begin{equation}\label{eq:hardy}
    \int|V|\cdot|u|^2dx\leq C\int|\nabla_Au|^2dx,
  \end{equation}
  for some $C>0$. Then, any solution $u\in\mathcal H^1$ of equation
  \eqref{eq:helmholtz} satisfies the following a priori estimates:
  \begin{align}\label{eq:estimate3D}
  & |||\nabla_Au|||^2
  +|u(0)|^2
  +\frac M2\int(\partial_rV)_-|u|^2
  \\
  &
  +\delta\left(\int\langle x\rangle^{-1}V_-|u|^2
  +\lambda\int\frac{|u|^2}{\langle x\rangle}
  +
  \int\frac{|\nabla_A^\tau u|^2}{|x|}
  +\sup_{R>0}\frac1{R^2}\int_{|x|=R}|u|^2
  d\sigma
  \right)
  \nonumber
  \\
  &\ \
  \leq
  C\left[N(f)^2+(|\epsilon|+\lambda)
  \left(N\left(\frac{f}{|\lambda|^{1/2}}\right)\right)^2\right],
  \nonumber
\end{align}
for some $C>0$ and some small $\delta>0$ depending on
$C_1,C_2,C_3,M$.
\end{theorem}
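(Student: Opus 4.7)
I would combine two multiplier identities. The first, obtained by testing \eqref{eq:helmholtz} against $\bar u$, yields the standard energy identity $\int|\nabla_A u|^2+\int V|u|^2=\lambda\int|u|^2-\mathrm{Re}\int f\bar u$, together with an $\epsilon$-absorption bound from the imaginary part; under \eqref{eq:hardy} this controls $\int|\nabla_A u|^2$ and $\int V|u|^2$ by the right-hand side. The second, and crucial, identity is a Morawetz-type virial identity obtained by testing against a radial first-order multiplier
\[
\mathcal M u=\psi(r)\,\partial_r^A u+\tfrac12\psi'(r)\,u+\tfrac{\psi(r)}{r}\,u,\qquad \partial_r^A=\tfrac{x}{|x|}\cdot\nabla_A,
\]
with $\psi$ a bounded radial profile. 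A natural choice, dictated by the constants $M+\tfrac12$ and $M$ appearing in \eqref{eq:condition}, is $\psi(r)=r$ on $[0,M]$ and $\psi(r)=M$ for $r\geq M$ (smoothed near $r=M$ and justified by approximation).

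Taking the real part of $\int(-Hu+(\lambda\pm i\epsilon)u)\,\overline{\mathcal M u}\,dx$ and integrating by parts yields, from the principal term $-\Delta_A u$, the positive bulk $\int\psi'|\partial_r^A u|^2+\int\tfrac{\psi}{r}|\nabla_A^\tau u|^2$, a boundary mass proportional to $|u(0)|^2$ coming from the $\psi(r)/r$ factor at the origin via $-\Delta(1/|x|)=4\pi\delta_0$ (specific to $n=3$), the Morrey-type flux $\sup_R R^{-2}\int_{|x|=R}|u|^2\,d\sigma$ arising from a third-order correction on $\psi$ combined with the 3D angular Hardy--Rellich inequality $\int|\nabla_A^\tau u|^2/|x|\geq c\sup_R R^{-2}\int_{|x|=R}|u|^2$, and the magnetic cross term $2\int\psi\,B_\tau\cdot\mathrm{Im}(\bar u\,\nabla_A u)\,dx$ produced by the commutator $[\partial_r^A,\nabla_A]$, in which only the tangential part of $B$ survives. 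The electric potential contributes $\int\psi\,\partial_r V\,|u|^2$ (split into $\pm$ parts as in the statement) plus a lower-order remainder controlled by $C_3$. The spectral term delivers $\lambda\int\psi'|u|^2$ and an $\epsilon$-dependent remainder which, combined with the first identity, accounts for the right-hand side involving $(|\epsilon|+\lambda)N(f/|\lambda|^{1/2})^2$.

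The magnetic cross term and the $(\partial_r V)_+$ piece are the only negatively-signed contributions. Each is bounded, via Cauchy--Schwarz on the spheres and the weighted $L^2_r L^\infty(S_r)$, $L^1_r L^\infty(S_r)$ hypotheses \eqref{eq:btau}--\eqref{eq:v+}, by $\tfrac{(M+1/2)^2}{M}C_1^2$ and $2(M+\tfrac12)C_2$ times the sum of positive LHS terms; the bookkeeping here is precisely why \eqref{eq:condition} is the sharp threshold that permits absorption. The source integrals are then estimated by the Morrey--Campanato duality $|\int fg|\leq N(f)\,|||g|||$, using $|||\partial_r^A u|||\leq|||\nabla_A u|||$ (already on the LHS), and for the $\lambda$-weighted $|u|^2$ term by scaling out $|\lambda|^{1/2}$.

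The main obstacle, in my view, is twofold: (i) bookkeeping the sharp constants so that the coefficients of $C_1^2$ and $C_2$ line up with \eqref{eq:condition}, which demands careful comparison between $\psi$, $\psi'$, $\psi/r$ and $\psi''$ in the angular Hardy--Rellich step; and (ii) rigorously justifying the appearance of the pointwise value $|u(0)|^2$ as a genuine distributional boundary term---this is the fingerprint of $n=3$, coming from the $\delta_0$-singularity of $\Delta(1/|x|)$, and requires a careful regularization of the $\psi(r)/r$ factor near the origin together with the $\mathcal H^1$-regularity furnished by \eqref{eq:hardy}. Once these two points are handled, the proof reduces to summing the identities and absorbing the small terms.
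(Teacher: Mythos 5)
Your high-level framework (energy identity plus a Morawetz/virial identity with a radial first-order multiplier, then bookkeeping constants to match \eqref{eq:condition}) is in the right family, but there are several genuine gaps that would prevent the argument from going through.

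First, and most importantly, you are missing the \emph{symmetric} multiplier. The paper does not only test against $\bar u$ (that is the special case $\varphi\equiv 1$): it tests against $\varphi_R u$ with a non-constant, decaying $\varphi_R$ (equal to $\beta/R$ on $\{|x|\leq R\}$ and $\beta/|x|$ outside). This symmetric multiplier is what produces, after summing with the anti-symmetric identity, the terms $\lambda\int\varphi_R|u|^2$ and $-\int\varphi_R V|u|^2$; these generate the weighted pieces $\lambda\int\frac{|u|^2}{\langle x\rangle}$ and $\int\langle x\rangle^{-1}V_-|u|^2$ on the left-hand side of \eqref{eq:estimate3D}. Your claim that ``the spectral term delivers $\lambda\int\psi'|u|^2$'' from the Morawetz multiplier is incorrect: because $\psi\partial_r^A+\tfrac12\Delta\phi$ is anti-symmetric, $\Re(\lambda u,\mathcal M u)_{L^2}=0$, so the $\lambda$-weighted term cannot come from there. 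Without $\varphi_R$ you simply cannot produce these LHS terms.

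Second, the choice of $\psi$ is structurally wrong. You take $\psi(r)=r$ for $r\leq M$ and $\psi\equiv M$ for $r\geq M$, so $\psi(0)=0$, $\inf\psi=0$, $\sup\psi=M$. The multiplier used in the paper, $\psi=\phi'_R$, ranges from $M$ (at $r=0$) to $M+\tfrac12$ (at $r=\infty$), and crucially there is a free scaling parameter $R$; the Dirac masses $-4\pi\delta_0$ and $-\tfrac1{R^2}\delta_{|x|=R}$ in $\Delta^2\phi_R$ produce both $|u(0)|^2$ and, after taking $\sup_R$, the Morrey flux $\sup_R R^{-2}\int_{|x|=R}|u|^2\,d\sigma$ directly. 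The lower bound $\inf\phi'_R\geq M$ is exactly what gives the coefficient $M$ in front of $\int\frac{|\nabla_A^\tau u|^2}{|x|}$, which is then the ``$Ma^2$'' in the quadratic-form positivity step; with your $\psi$ starting at $0$, that coefficient collapses and the quadratic condition \eqref{eq:condition} cannot be recovered. Moreover your $\psi$ is concave with $\psi''=-\delta_{r=M}$, which would introduce a \emph{negative} contribution from $\int\psi''|\partial_r^A u|^2$.

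Third, your mechanism for the flux term is wrong. You invoke an ``angular Hardy--Rellich inequality'' of the form $\int|\nabla_A^\tau u|^2/|x|\geq c\sup_R R^{-2}\int_{|x|=R}|u|^2$, but this is false: for $A\equiv 0$ and $u$ radial the left-hand side vanishes while the right-hand side is $4\pi\sup_R|u(R)|^2>0$. In the paper, $\sup_R R^{-2}\int_{|x|=R}|u|^2\,d\sigma$ is not an inequality consequence of the tangential gradient; it arises directly and with the correct sign from the surface delta $\delta_{|x|=R}$ in $\Delta^2\phi_R$ (and in $\Delta\varphi_R$), after which one uses the freedom of $R$ to take the supremum. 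Once you have those three pieces in place --- the symmetric $\varphi_R$, the correct two-level profile $\phi'_R\in[M,M+\tfrac12]$ with free $R$, and the bi-Laplacian Dirac masses --- the rest of your outline (Morrey--Campanato duality on the source, absorbing $B_\tau$ and $(\partial_rV)_+$ via Cauchy--Schwarz on spheres) does align with the paper's proof.
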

\begin{theorem}[Higher-dimensional Morrey-Campanato
estimates]\label{thm:main2}
  Let $n\geq4$ and; let us assume that
  \begin{equation}\label{eq:btau4D}
    \||x|^2B_\tau\|_{L^\infty}=C_1<\infty
  \end{equation}
  \begin{equation}\label{eq:vr4D}
    \||x|^3(\partial_rV)_+\|_{L^\infty}=C_2<\infty
  \end{equation}
  \begin{equation}\label{eq:v+4D}
    \|\langle x\rangle^{-1}|x|^3V_+\|_{L^\infty}=C_3<\infty,
  \end{equation}
  and moreover
  \begin{equation}\label{eq:condition2}
    C_1^2+2C_2<(n-1)(n-3).
  \end{equation}
  Assume, moreover, that $V$ satisfies the Hardy-type condition
  \begin{equation}\label{eq:hardy2}
    \int|V|\cdot|u|^2dx\leq C\int|\nabla_Au|^2dx,
  \end{equation}
  for some $C>0$. Then, any solution $u\in\mathcal H^1$ of equation
  \eqref{eq:helmholtz} satisfies the following a priori estimates:
  \begin{align}\label{eq:estimate4D}
  & |||\nabla_Au|||^2
  +\sup_{R>0}\left(\frac{1}{R^{2}}\int_{|x|=R}|u|^2d\sigma\right)
  +\int(\partial_rV)_-|u|^2
  \\
  &\ \ \
  +\delta\left(\int\langle x\rangle^{-1}V_-|u|^2
  +\lambda\int\frac{|u|^2}{\langle x\rangle}+
  \int\frac{|\nabla_A^\tau u|^2}{|x|}
  +\int\frac{|u|^2}{|x|^3}dx
  \right)
  \nonumber
  \\
  &\ \
  \leq
  C\left[N(f)^2+(|\epsilon|+\lambda)
  \left(N\left(\frac{f}{|\lambda|^{1/2}}\right)\right)^2\right],
  \nonumber
\end{align}
for some $C>0$ and some small $\delta>0$ depending on
$C_1,C_2,C_3,M$.
\end{theorem}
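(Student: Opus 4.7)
The plan is to extend the multiplier method used in Theorem \ref{thm:main} to dimension $n\geq 4$, where a strictly positive Hardy-type coefficient $(n-1)(n-3)$ becomes available and provides enough room to absorb the magnetic and electric bad terms. Concretely, I test the equation \eqref{eq:helmholtz} against the Morawetz-type multiplier
\[
  M[u]=\phi(x)\cdot\nabla_A u+\tfrac12(\nabla\cdot\phi)\,u+\psi(x)\,u,
\]
with $\phi(x)=\varphi(|x|)\,x/|x|$ a radial vector field and $\psi$ a radial scalar weight to be chosen below. Taking real and imaginary parts of $\int \overline{M[u]}\cdot(\text{equation})\,dx$ and integrating by parts produces a virial identity whose left-hand side contains the positive quadratic form $D\phi:(\nabla_A u\otimes\overline{\nabla_A u})$, a $|u|^2$-term from $-\tfrac14\Delta(\nabla\cdot\phi)$, a $\lambda$-term with coefficient $\nabla\cdot\phi$, the radial derivative $(\phi\cdot\nabla V)|u|^2$, and the magnetic remainder $\mathcal R_A=-\mathrm{Im}\!\int\overline u\,\nabla_A u\cdot(B\phi)\,dx$. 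Because $\phi$ is radial, $B\phi=\varphi(r)B_\tau$, so only the tangential magnetic component enters $\mathcal R_A$.

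I then make two complementary choices of $(\varphi,\psi)$. The choice $\varphi(r)=\min(r,R)$, followed by taking a supremum over $R>0$, captures the Morrey-Campanato norm $|||\nabla_A u|||^2$ and the boundary trace $\sup_R R^{-2}\!\int_{|x|=R}|u|^2 d\sigma$ arising from the jump of $\varphi'$ at $r=R$. The choice $\varphi(r)=r$ (i.e.\ $\phi(x)=x$) gives $\nabla\cdot\phi=n$ and, after one further integration by parts against the weight $|x|^{-2}$, yields the Hardy gain $\int|u|^2/|x|^3\,dx$ with coefficient $(n-1)(n-3)$; this constant is strictly positive exactly when $n\geq 4$, which forces the dimensional restriction. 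The tangential gain $\int|\nabla_A^\tau u|^2/|x|\,dx$ survives from the angular part of $D\phi$, and the companion $\psi(x)=\langle x\rangle^{-1}$ produces the $\lambda\!\int|u|^2/\langle x\rangle$ and $\langle x\rangle^{-1}V_-|u|^2$ contributions, with the $V_+$ cost it creates controlled by \eqref{eq:v+4D}.

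The bad terms are absorbed as follows. Using \eqref{eq:btau4D} and Cauchy-Schwarz,
\[
  |\mathcal R_A|\le \bigl\||x|^2B_\tau\bigr\|_{L^\infty}\!\int\frac{|u|\,|\nabla_A u|}{|x|}\,dx\le \frac{C_1^2}{2}\!\int\frac{|u|^2}{|x|^3}\,dx+\frac12\!\int\frac{|\nabla_A u|^2}{|x|}\,dx,
\]
and the positive part of $\partial_r V$ contributes, by \eqref{eq:vr4D}, at most $C_2\!\int|u|^2/|x|^3\,dx$. Both errors are swallowed by the good Hardy term with coefficient $(n-1)(n-3)$, leaving a strictly positive net contribution precisely when \eqref{eq:condition2} holds. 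The $\epsilon$-term is controlled by multiplying \eqref{eq:helmholtz} by $\overline u$ and taking the imaginary part, while the source integral $\mathrm{Re}\!\int\overline{M[u]}f\,dx$ is estimated by the Morrey-Campanato duality $|\int fg|\le |||g|||\,N(f)$, using the estimates on $|||\nabla_A u|||$ and the spherical supremum already obtained; the factor $(|\epsilon|+\lambda)N(f/\sqrt\lambda)^2$ comes from the pointwise bound $|M[u]|\lesssim |\nabla_A u|+\sqrt\lambda|u|$ applied dyadically. The main obstacle is the sharp quantitative book-keeping: exhibiting the constant $(n-1)(n-3)$ on the right of \eqref{eq:condition2} (rather than the weaker $(n-2)^2$) requires that the radial and tangential parts of $D\phi$ combine optimally with $\Delta(\nabla\cdot\phi)$ and the correction $\varphi'(r)-\varphi(r)/r$, which is the delicate point of the computation.
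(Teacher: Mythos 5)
There is a genuine gap, and it concerns the very object the student needs: the Hardy term $\int|u|^2/|x|^3\,dx$ with coefficient $(n-1)(n-3)$. You propose $\varphi(r)=r$, i.e.\ $\phi(x)=x$, as the vector field producing this gain. But with $\phi(x)=x$ one has $\nabla\cdot\phi=n$, a constant, so $\Delta(\nabla\cdot\phi)=0$ and no $|u|^2/|x|^3$ term appears at all. The Hardy gain comes from a vector field that is asymptotically $Mx/|x|$ (in the scalar notation of the paper, $\phi'(r)\approx M$), whose bi-Laplacian is $-M(n-1)(n-3)/r^3$ in dimension $n\neq3$. The paper encodes both effects in one interpolated multiplier $\phi_R$ (with $\phi'_R=M+\tfrac{n-1}{2n}\min(r/R,\dots)$): the constant part $M$ supplies the Hardy weight, while the piecewise-linear part supplies the Morrey--Campanato control and the $\delta_{|x|=R}$ boundary trace. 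Your first choice $\varphi(r)=\min(r,R)$ is in the right direction, but the second one is not, and the ``further integration by parts against $|x|^{-2}$'' you invoke is not a step that appears in the Morawetz machinery here.

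The second difficulty is that even with the correct multiplier the constant $(n-1)(n-3)$ in \eqref{eq:condition2} does not fall out of a fixed-weight Young inequality such as the one you write (with coefficients $C_1^2/2$ and $1/2$). After collecting terms, one faces the quadratic form
\[
Ma^2-\left(M+\tfrac12\right)C_1ab+\tfrac14\bigl[M(n-1)(n-3)-(2M+1)C_2-\cdots\bigr]b^2,
\]
with $a^2=\int|\nabla_A^\tau u|^2/|x|$ and $b^2=\int|u|^2/|x|^3$, and its positivity requires
\[
\frac{(M+\tfrac12)^2}{M^2}C_1^2+2\,\frac{M+\tfrac12}{M}\,C_2<(n-1)(n-3).
\]
Only in the limit $M\to\infty$ does this collapse to $C_1^2+2C_2<(n-1)(n-3)$ --- a step available precisely because, unlike in $n=3$, the Hardy coefficient scales linearly in $M$. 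Your proposal does not introduce the free parameter $M$ nor the optimization over it, which is exactly the ``delicate book-keeping'' you flag but do not carry out; and the reference to $(n-2)^2$ as a ``weaker'' target is backwards, since $(n-2)^2>(n-1)(n-3)$. Finally, in bounding $\mathcal R_A$ you should use $|B_\tau\cdot\nabla_Au|=|B_\tau|\,|\nabla_A^\tau u|$ rather than the full gradient: the good coercive term is $\int|\nabla_A^\tau u|^2/|x|$, not $\int|\nabla_A u|^2/|x|$, so the version you wrote cannot be absorbed.
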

Let us make some remarks about the statements of Theorems
\ref{thm:main}, \ref{thm:main2} and their possible applications.
\begin{remark}\label{rem:1}
  Estimates \eqref{eq:estimate3D} and \eqref{eq:estimate4D} recover
  the uniform (with respest ti $\epsilon$) estimate in the main Theorem of \cite{pv}, in the
  purely electric case $A\equiv0$ (actually the refraction index
  $n(x)$ there plays the role of our electric potential $V$). In
  fact, here we have some gain in the term involving $\lambda$ at
  the left-hand side (analogous to the term $|||n^{1/2}u|||^2$ in the main Theorem by
  \cite{pv}), which is due to an appropriate choice of the symmetric
  multiplier $\varphi$ (see Section \ref{sec:proof} in the
  following).
\end{remark}
\begin{remark}[Assumptions on the electromagnetic
field]\label{rem:assu}
  Let us give an interpretation of assumptions \eqref{eq:condition},
  \eqref{eq:condition2}.
  Observe the difference on the decay and singularity informations about
  $A,V$, between the 3D case and the higher dimensional case.
  Indeed, in dimension $n=3$, potentials behaving like $|A|=C/|x|$,
  $|V|=C/|x|^2$ are not allowed, while assumptions \eqref{eq:btau},
  \eqref{eq:vr}, \eqref{eq:v+} are satisfied by potentials with
  these behaviors
  \begin{equation*}
    |A|\leq\frac{C}{|x|^{1-\epsilon}+|x|^{1+\epsilon}},
    \qquad
    |V|\leq\frac{C}{|x|^{2-\epsilon}+|x|^{2+\epsilon}},
  \end{equation*}
  with $C>0$, and according with the smallness of $B_\tau$ and
  $(\partial_rV)_+$ required by \eqref{eq:condition}. In fact,
  potentials with critical decay and singularity are permitted by
  the higher dimensional assumptions \eqref{eq:btau4D},
  \eqref{eq:vr4D}, \eqref{eq:v+4D} and \eqref{eq:condition2}.

  Moreover, notice that the size of $C_3$ is not
  relevant, both in \eqref{eq:condition} and \eqref{eq:condition2};
  indeed, no smallness assumption on $V$ is needed in
  order to obtain estimates \eqref{eq:estimate3D},
  \eqref{eq:estimate4D}. In the 3D case, assume that $C_1=0$, i.e. the field $B$
  is non-trapping, according to Definition \ref{def.bitau}; hence,
  since $\min_{M\geq0}2(M+1/2)=1$, condition \eqref{eq:condition}
  simply reads
  \begin{equation*}
    C_2<1.
  \end{equation*}
  On the other hand, if we assume $C_2=0$, in other words $V$ is
  repulsive, since $\min_{M\geq0}(M+1/2)^2/M=2$, the condition on
  $C_1$ is
  \begin{equation*}
    C_1^2<\frac12.
  \end{equation*}
  We claim that \eqref{eq:condition} is in fact sharp; it would be
  interesting to find counterexamples to estimate
  \eqref{eq:estimate3D}, with potentials satisfying \eqref{eq:btau},
  \eqref{eq:vr} and \eqref{eq:v+}, but not satisfying
  \eqref{eq:condition}.

  Observe also that no assumptions on $A$ (except for the
  self-adjointness) are in the statement of Theorems \ref{thm:main},
  \ref{thm:main2}; hence the gauge invariance of these results is
  preserved.
\end{remark}
\begin{remark}[Hardy conditions on $V$]\label{rem:hardy}
  The Hardy-type conditions \eqref{eq:hardy}, \eqref{eq:hardy2} have
  to be interpreted by means of the magnetic Hardy inequality
  \begin{equation}\label{eq:hardymagn}
    \int_{\R^n}\frac{|u|^2}{|x|^2}dx\leq\frac{(n-2)^2}{4}\int_{\R^n}|\nabla_Au|^2dx,
  \end{equation}
  which holds in dimension $n\geq3$ on any function $u\in\mathcal
  H^1$ (see \cite{FV} for a simple proof of \eqref{eq:hardymagn} by integration by
  parts).
\end{remark}
\begin{remark}[Absence of resonances]\label{rem:resonance}
  One of the possible applications of Theorems \ref{thm:main} and
  \ref{thm:main2} is to prove absence of zero-energy resonances
  for the Hamiltonian $H$. Actually the right definition of
  resonances is not completely clear, see e.g. \cite{a}, \cite{ah},
  \cite{is},
  \cite{j}, \cite{j2}, \cite{jk}, \cite{mu}, \cite{rs}. In fact,
  in the study of dispersive equations
  related to $H$, as the magnetic Schr\"odinger equation
  \begin{equation*}
    iu_t+Hu=0,
  \end{equation*}
  or the magnetic wave equation
  \begin{equation*}
    u_{tt}+Hu=0,
  \end{equation*}
  a typical abstract assumption of absence of zero-energy resonances
  is needed, in order to preserve the free dynamics (see e.g. the
  recent papers \cite{ges}, \cite{ges2}, \cite{pda-lf1},
  \cite{pda-lf2}, \cite{ste}). By the statement of Theorems \ref{thm:main} and
  \ref{thm:main2} it is natural to consider the following definition
  of zero-energy resonances, introduced in \cite{brv}:
  \begin{definition}\label{def:res}
    A function $u$ is a zero-resonance if
    \begin{equation*}
      u\notin L^2,
      \qquad
      u\in\mathcal H^1_{\text{loc}},
      \qquad
      |V|^{\frac12}u\in L^2,
    \end{equation*}
    \begin{equation*}
      \sup_{R>1}\frac1R\int_{|x|\leq R}\left[|V|+\langle
      x\rangle^{-2}\right]|u|^2<\infty,
    \end{equation*}
    \begin{equation*}
      \liminf_{R\to\infty}\frac1R\int_{|x|\leq R}\left[|V|+\langle
      x\rangle^{-2}\right]|u|^2=0,
    \end{equation*}
    and $u$ satisfies the equation
    \begin{equation*}
      -Hu=0.
    \end{equation*}
  \end{definition}
  It is possible to see that Theorems \ref{thm:main} and
  \ref{thm:main2} imply the absence of zero-energy resonances,
  according to the previous Definition. Indeed, one should repeat
  the proof of Lemma \ref{lem:parts} (see Section
  \ref{sec:morawetz}) by performing the integration by parts on
  compact balls of $\R^n$, taking into account the boundary terms,
  which in fact turn out with correct signs. We omit here
  further details (for completeness we remand to \cite{brv}, final
  section). Actually, we also remark that in \cite{FV} it is proved that, under
  assumptions of type \eqref{eq:condition},
  \eqref{eq:condition2}, weakly dispersive estimates and Strichartz
  estimates are true for the magnetic Schr\"odinger and wave
  equation;
  in that paper, no abstract assumptions on the Hamiltonian
  (namely absence of zero-resonances) are needed. Observe that our
  assumptions on the term $B_\tau$ does not appear in \cite{is}, in
  which First order perturbations of $-\Delta$ are also treated.
\end{remark}
The rest of the paper is devoted to the proofs of the main theorems.
These are based on the Morawetz-type Lemma \ref{lem:parts}, which is
proved in the next section; then a suitable choice of the
multipliers (last section) completes the proofs.

\section{Integration by parts}\label{sec:morawetz}

In this Section we state and prove Lemma \ref{lem:parts}, which is
our fundamental tool for the proof of the main theorems. It is based
on the standard technique of Morawetz multipliers, introduced in
\cite{mor} for the Klein-Gordon equation and then used in several
other contests (dispersive equations, kinetic equations, Helmholtz
equations ecc...). We should mention here \cite{pv}, as a seminal
work about the relation between Morawetz methods and
Morrey-Campanato estimates for the Helmholtz equation. Later, in
\cite{brv}, \cite{FV} it was shown as these techniques can be
adopted to prove some weak-dispersive estimates for Schr\"odinger
and wave equations with electric and electromagnetic potentials.

We prove the following Lemma, which will be used to prove the main
theorems.
\begin{lemma}\label{lem:parts}
  Let $\phi(|x|),\psi(|x|)$ be two radial, real-valued multipliers and let
  $u\in\mathcal H^1$ be a solution of equation \eqref{eq:helmholtz}. Then, the
  following identity holds:
  \begin{align}\label{eq:identity}
    & \int\nabla_AuD^2\phi\overline{\nabla_Au}dx
    -\int\varphi\left|\nabla_Au\right|^2dx
    -\int\left(\frac14\Delta^2\phi-\frac12\Delta\varphi\right)|u|^2dx
    \\
    & -\int\left[\frac12\phi'(\partial_rV)+\varphi V\right]|u|^2dx
    +\Im\int\phi'uB_\tau\cdot\overline{\nabla_Au}dx
    +\lambda\int\varphi|u|^2dx
    \nonumber
    \\
    & = \Re\int
    f\left(\nabla\phi\cdot\overline{\nabla_Au}+\frac12(\Delta\phi)\overline{u}\right)dx
    +\Re\int f\varphi\overline udx
    \pm\epsilon\Im\int u\nabla\phi\cdot\overline{\nabla_A u}dx,
    \nonumber
  \end{align}
  where $D^2\phi,\Delta^2\phi$ denote, respectively, the Hessian and
  the bi-Laplacian of $\phi$, while $B_\tau$ is as in Definition
  \ref{def.bitau}.
\end{lemma}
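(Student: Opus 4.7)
The plan is to test equation \eqref{eq:helmholtz}, rewritten as $\Delta_A u - Vu + (\lambda\pm i\epsilon)u = f$, against the complex multiplier
\[
\overline M := \nabla\phi\cdot\overline{\nabla_A u} + \tfrac{1}{2}(\Delta\phi)\,\bar u + \varphi\,\bar u,
\]
integrate over $\R^n$, and take real parts. This is the Morawetz method adapted to the magnetic setting; the two natural blocks of $M$ -- the ``corrected vector field'' $\nabla\phi\cdot\overline{\nabla_A u}+\tfrac12(\Delta\phi)\bar u$ and the scalar $\varphi\bar u$ -- are handled separately and summed. The basic tools are the magnetic integration by parts $\int(\Delta_A u)\bar g = -\int\nabla_A u\cdot\overline{\nabla_A g}$ and the carr\'e du champ identity $2\Re(\bar u\,\nabla_A u) = \nabla|u|^2$, the latter being valid because $A$ is real-valued.

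The delicate step is $\Re\int(\Delta_A u)\,\nabla\phi\cdot\overline{\nabla_A u}$. After one IBP and expansion of $\nabla_A(\nabla\phi\cdot\nabla_A u)$, the desired Hessian term $\int\nabla_A u\cdot D^2\phi\,\overline{\nabla_A u}$ appears, together with a second-derivative term $\Re\sum_{j,k}(\partial_j\phi)\,\nabla_A^k u\,\overline{\nabla_A^k\nabla_A^j u}$. Here the magnetic commutator $[\nabla_A^j,\nabla_A^k] = iB_{jk}$ is crucial: swapping the two magnetic derivatives and then invoking $\partial_j|\nabla_A^k u|^2 = 2\Re(\nabla_A^j\nabla_A^k u\,\overline{\nabla_A^k u})$ rewrites the term as $\tfrac{1}{2}\nabla\phi\cdot\nabla|\nabla_A u|^2$ plus an anomalous piece of the form $\Im\int \phi'\,u\,B_\tau\cdot\overline{\nabla_A u}$. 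That it is precisely the tangential component $B_\tau=\frac{x}{|x|}B$ which appears -- and not the full antisymmetric matrix $B$ -- is forced by the radial form $\nabla\phi=\phi'\frac{x}{|x|}$ combined with the antisymmetry of $B$. A further IBP on the divergence produces $-\tfrac{1}{2}\int(\Delta\phi)|\nabla_A u|^2$, which exactly cancels the $+\tfrac{1}{2}\int(\Delta\phi)|\nabla_A u|^2$ generated by a double IBP on the correction piece $\tfrac{1}{2}(\Delta\phi)\bar u$; this cancellation is the whole point of using the corrected multiplier, and what survives from the correction is $\tfrac{1}{4}\int\Delta^2\phi\,|u|^2$.

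The remaining pieces are routine. Testing $\Delta_A u$ against $\varphi\bar u$ gives, via one IBP, $-\int\varphi|\nabla_A u|^2+\tfrac{1}{2}\int(\Delta\varphi)|u|^2$. The lower-order contributions coming from $Vu$, $\lambda u$, and $\pm i\epsilon u$ paired with each block of $M$ are handled with the carr\'e du champ identity followed by one more IBP; the various $V\Delta\phi|u|^2$ and $\lambda\Delta\phi|u|^2$ pieces cancel among themselves and one is left exactly with $-\tfrac{1}{2}\int\phi'(\partial_r V)|u|^2$, $-\int V\varphi|u|^2$ and $\lambda\int\varphi|u|^2$ on the LHS, and $\pm\epsilon\,\Im\int u\,\nabla\phi\cdot\overline{\nabla_A u}$ on the RHS. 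All the integrations by parts are justified for $u\in\mathcal H^1$ by a standard truncation/density argument, checking that boundary terms at infinity vanish in the limit. The main obstacle is the sign and index bookkeeping needed so that the magnetic commutator delivers exactly the tangential quantity $B_\tau$ with the correct sign; everything else is a careful accounting of which pairings of equation-terms against which pieces of the multiplier contribute which summand.
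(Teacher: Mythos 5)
Your proposal is correct and takes essentially the same approach as the paper: both test \eqref{eq:helmholtz} against the combined multiplier $\nabla\phi\cdot\overline{\nabla_A u}+\tfrac12(\Delta\phi)\overline u+\varphi\overline u$ and take real parts, isolating the Hessian, bilaplacian, $\partial_r V$ and $B_\tau$ contributions through the magnetic commutator. The only difference is presentational: the paper packages the computation as a symmetric plus an anti-symmetric multiplier and invokes the double-commutator identity $-\tfrac12\Re(Hu,[H,\phi]u)=-\tfrac14([H,[H,\phi]]u,u)$ with the explicit evaluation cited from \cite{FV}, whereas you carry out that same evaluation directly via $[\nabla_A^j,\nabla_A^k]=iB_{jk}$ and coordinate-level integration by parts.
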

\begin{proof}
  We
  divide the proof into two parts, acting on equation \eqref{eq:helmholtz}
  with a symmetric multiplier first, and then with an anti-symmetric
  one.

  {\bf Symmetric multiplier.} Let us multiply equation
  \eqref{eq:helmholtz} by $\varphi u$ in the $L^2$-sense; taking
  the resulting real parts, and observing that
  \begin{equation*}
    -\Re(Hu,\varphi
    u)_{L^2}=-\int\varphi\left|\nabla_Au\right|^2dx
    +\frac12\int\Delta\varphi|u|^2dx
    -\int\varphi V|u|^2dx,
  \end{equation*}
  it gives the identity
  \begin{align}\label{eq:symmetric}
    & -\int\varphi\left|\nabla_Au\right|^2dx
    +\frac12\int\Delta\varphi|u|^2dx
    -\int\varphi V|u|^2dx
    +\lambda\int\varphi|u|^2dx
    \\
    & \ \ \ =\Re\int f\varphi\overline udx.
    \nonumber
  \end{align}
  On the other hand, the imaginary parts give
  \begin{equation}\label{eq:symmetric2}
    \pm\epsilon\int\varphi|u|^2dx=\Im\int f\varphi\overline udx.
  \end{equation}

  {\bf Anti-symmetric multiplier.} Let us multiply equation
  \eqref{eq:helmholtz} by
  \begin{equation*}
    \frac12[H,\phi]u=\nabla\phi\cdot\nabla_Au+\frac12(\Delta\phi)u,
  \end{equation*}
  in the sense of $L^2$. It gives
  \begin{equation}\label{eq:1}
    -\frac12(Hu,[H,\phi]u)_{L^2}+\frac\lambda2(u,[H,\phi]u)_{L^2}
    \pm i\frac{\epsilon}{2}(u,[H,\phi]u)_{L^2}=
    (f,[H,\phi]u)_{L^2}.
  \end{equation}
  Now we take the real part of identity \eqref{eq:1}. First observe
  that, since the commutator $[H,\phi]$ is anti-symmetric, we have
  \begin{equation}\label{eq:2}
    \Re(u,[H,\phi]u)_{L^2}=0,
  \end{equation}
  \begin{equation}\label{eq:22}
    \pm\Re i\frac\epsilon2(u,[H,\phi]u)_{L^2}=\mp\epsilon\Im\int u\nabla\phi\cdot\overline{\nabla_A
    u}dx.
  \end{equation}
  For the same reason, we see immediately that
  \begin{equation}\label{eq:3}
    -\frac12\Re(Hu,[H,\phi]u)_{L^2}=
    -\frac14([H,[H,\phi]]u,u)_{L^2}.
  \end{equation}
  The explicit computation of the second commutator $[H,[H,\phi]]$
  has been already performed in \cite{FV}; this is the point in which the trapping component $B_\tau$
  appears.  By see formulas (1.13) and (2.3) in \cite{FV} we obtain
  \begin{align}\label{eq:4}
    -\frac14([H,[H,\phi]]u,u)_{L^2}= &
    \int\nabla_AuD^2\phi\overline{\nabla_Au}dx-\frac14\int|u|^2\Delta^2\phi dx
    \\
    & -\frac12\int\phi'\partial_rV|u|^2dx
    +\Im\int\phi'uB_\tau\cdot\overline{\nabla_Au}dx.
  \end{align}
  We remark that the idea of the computation \eqref{eq:4} in \cite{FV} is to use the
  Leibnitz formula for $\nabla_A$ in the form $\nabla_A(fg)=(\nabla_Af)f+(\nabla
  g)f$; hence we can put all the distorted derivatives on the
  solution and the straight derivatives on the multiplier.

  Finally, by \eqref{eq:1}, \eqref{eq:2}, \eqref{eq:22}, \eqref{eq:3} and \eqref{eq:4}
  we obtain the following identity:
  \begin{align}\label{eq:anti-symmetric}
    &
    \int\left(\nabla_AuD^2\phi\overline{\nabla_Au}-\frac14|u|^2\Delta^2\phi
    -\frac12\phi'(\partial_rV)|u|^2\right)dx
    +\Im\int\phi'uB_\tau\cdot\overline{\nabla_Au}dx
    \\
    & \ \ =\Re\int
    f\left(\nabla\phi\cdot\overline{\nabla_Au}+\frac12(\Delta\phi)\overline{u}\right)dx
    \pm\epsilon\Im\int u\nabla\phi\cdot\overline{\nabla_A
    u}dx.
    \nonumber
  \end{align}
  Now identity \eqref{eq:identity} follows by summing up
  \eqref{eq:symmetric} with \eqref{eq:anti-symmetric}. The following
  regularity
  remark completes the proof.
\begin{remark}\label{rem:regularity}
  We must notice that the term requiring more regularity
  on $u$, in order to justify the integration by parts, is
  the one involving second commutator $[H,[H,\phi]]$. In principle, it requires $u\in\mathcal D(H^2)$
  to make sense; actually, the integration by parts on it shows that a term of the
  form $\int\Delta u\nabla\phi\cdot\overline{\nabla_A u}$ needs to be a priori bounded,
  and
  $u\in\mathcal H^{\frac32}$ is sufficient. The proof of identity
  \eqref{eq:identity} for $\mathcal H^1$-solutions follows by
  approximation on $f$. Indeed, if $f\in\mathcal
  D(H^s)$, $s\geq0$, and $\epsilon\neq0$, there
  exists a unique solution $u\in\mathcal D(H^s)$ of
  \eqref{eq:helmholtz}; now the density of $\mathcal C^\infty_0$
  in $\mathcal D(H^s)$ completes the argument.
\end{remark}
\end{proof}

\section{Proof of the main Theorems \ref{thm:main}, \ref{thm:main2}}\label{sec:proof}
We pass now the the proofs of our main theorems. These are based on
identity \eqref{eq:identity}, by suitable choices of the multipliers
$\phi,\varphi$. Our choice of the multipliers follows an idea
introduced in \cite{brv}, and then used in \cite{FV} with explicit
definitions. The multipliers are analogous, in dimensions $n=3$,
$n\geq4$, but give different results and conditions on the
potentials (see Remark \ref{rem:assu}).

\subsection{Proof of Theorem \ref{thm:main}} We denote by $r=|x|$; following
\cite{FV}, we define $\phi_0$ as
  \begin{equation*}
  \phi_0(x)=\int_0^x\phi_0'(s)\,ds,
\end{equation*}
where
\begin{equation*}
  \phi'_0=\phi'_0(r)=
  \begin{cases}
    M+\frac13r,
    \qquad
    r\leq1
    \\
    M+\frac12-\frac1{6r^2},
    \qquad
    r>1,
  \end{cases}
\end{equation*}
and $M$ is given by assumption \eqref{eq:condition}. We have
\begin{equation*}
  \phi_0''(r)=
  \begin{cases}
    \frac13,
    \qquad
    r\leq1
    \\
    \frac1{3r^3},
    \qquad
    r>1
  \end{cases}
\end{equation*}
and the bilaplacian is given by
\begin{equation*}
  \Delta^2\phi_0(r)=-4\pi\delta_{x=0}-\delta_{|x|=1},
\end{equation*}
in the distributional sense. By scaling, for any $R>0$ we define
\begin{equation*}
  \phi_R(r)=R\phi_0\left(\frac rR\right),
\end{equation*}
hence
\begin{equation}\label{eq:fi1}
  \phi'_R(r)=
  \begin{cases}
    M+\frac r{3R},
    \qquad
    r\leq R
    \\
    M+\frac12-\frac{R^2}{6r^2},
    \qquad
    r>R
  \end{cases}
\end{equation}
\begin{equation}\label{eq:fi2}
  \phi''_R(r)=
  \begin{cases}
    \frac1{3R},
    \qquad
    r\leq R
    \\
    \frac1R\cdot\frac{R^3}{3r^3},
    \qquad
    r>R
  \end{cases}
\end{equation}
\begin{equation}\label{eq:filapl}
  \Delta\phi_R(r)=
  \begin{cases}
    \frac1R+\frac{2M}{r},
    \qquad
    r\leq R
    \\
    \frac{1+2M}{r},
    \qquad
    r>R
  \end{cases}
\end{equation}
\begin{equation}\label{eq:fibi}
  \Delta^2\phi_R(r)=-4\pi\delta_{x=0}-\frac1{R^2}\delta_{|x|=R}.
\end{equation}
Observe that $\phi'_R,\phi''_R,\Delta\phi_R\geq0$ and moreover
\begin{equation}\label{eq:estimatephi}
  \sup_{r\geq0}\phi'_R(r)\leq M+\frac12
  \qquad
  \sup_{r\geq0}\phi''_R(r)\leq \frac{1}{3R},
  \qquad
  \sup_{r\geq0}\Delta\phi_R\leq\frac{1+2M}{r},
\end{equation}
\begin{equation}\label{eq:estimatephi2}
  \inf_{r\geq0}\phi'_R(r)\geq M.
\end{equation}
In fact, this choice of $\phi_R$ had been made in the reverse way;
we started from the bi-laplacian, which contains the term
$\delta_{r=R}$ and seems to optimize the size condition
\eqref{eq:condition}, as we see in the following.

Now we define $\varphi_R$ as follows:
\begin{equation}\label{eq:varfi}
  \varphi_R(r)=
  \begin{cases}
    \frac\beta R,
    \qquad
    r\leq R
    \\
    \frac\beta r,
    \qquad
    r>R,
  \end{cases}
\end{equation}
for some $\beta<\frac13$ to be chosen later. The reason of the bound
$1/3$ for $\beta$ will be clear in Section \ref{subsubsec2}. Observe
that
\begin{equation}\label{eq:varfidecad}
  C_0\langle x\rangle^{-1}\varphi_R(r)\leq C\langle x\rangle^{-1},
\end{equation}
for some $C=C(\beta)>0$, and $C_0=C_0(\beta)>0$ such that
$C,C_0\to0$ as $\beta\to0$. Here $\langle x\rangle=(1+|x|^2)^{1/2}$.
By a direct computation we obtain
\begin{equation}\label{eq:deltavarphi}
  \Delta\varphi_R=-\frac{\beta}{R^2}\delta_{|x|=R},
\end{equation}
which is true in the distributional sense.

Let us now put the multipliers $\phi_R,\varphi_R$ in identity
\eqref{eq:identity} and begin to estimate. We start with the
estimate of the right-hand side.

\subsubsection{Estimate of the RHS in
\eqref{eq:identity}}\label{subsubsec1}

By \eqref{eq:estimatephi} and Cauchy-Schwartz, we have
\begin{align}\label{eq:estf}
  & \left|\int f\nabla\phi\cdot\overline{\nabla_A u}\right|
  \leq\left(M+\frac12\right)\sum_{j\in\mathbb
  Z}\int_{C(j)}|f|\cdot|\nabla_A u|
  \\
  &\ \ \ \ \ \ \ \
  \leq\left(M+\frac12\right)\sum_{j\in\mathbb Z}\left(2^{-j-1}\int_{C(j)}|\nabla_A u|^2\right)^{\frac12}
  \left(2^{j+1}\int_{C(j)}|f|^2\right)^{\frac12}
  \nonumber
  \\
  &\ \ \ \ \ \ \ \
  \leq\left(M+\frac12\right)\left(\sup_{R>0}\frac1R\int_{|x|\leq R}|\nabla_A u|^2\right)^{\frac12}
  \sum_{j\in\mathbb Z}\left(2^{j+1}\int_{C(j)}|f|^2\right)^{\frac12}
  \nonumber
  \\
  &\ \ \ \ \ \ \ \
  \leq\alpha|||\nabla_A u|||^2
  +C(\alpha)N(f)^2,
  \nonumber
\end{align}
with $\alpha,C(\alpha)>0$. Analogously, by \eqref{eq:estimatephi}
and \eqref{eq:varfi},
\begin{align}\label{eq:estf2}
  & \left|\int f(\frac12\Delta\phi+\varphi)\overline{u}\right|
  \leq\left(\frac12+M+\beta\right)\sum_{j\in\mathbb
  Z}\int_{C(j)}|f|\cdot\frac{|u|}{|x|}
  \\
  &\ \ \ \ \ \ \ \
  \leq\left(\frac12+M+\beta\right)\sum_{j\in\mathbb Z}\left(2^{-j}\int_{C(j)}\frac{|u|^2}{|x|^2}\right)^{\frac12}
  \left(2^{j}\int_{C(j)}|f|^2\right)^{\frac12}
  \nonumber
  \\
  &\ \ \ \ \ \ \ \
  \leq\left(\frac12+M+\beta\right))\left(\sup_{R>0}\frac1{R^2}\int_{|x|=R}|u|^2d\sigma\right)^{\frac12}
  \sum_{j\in\mathbb Z}\left(2^{j}\int_{C(j)}|f|^2\right)^{\frac12}
  \nonumber
  \\
  &\ \ \ \ \ \ \ \
  \leq\alpha\sup_{R>0}\frac1{R^2}\int_{|x|=R}|u|^2d\sigma
  +C(\alpha)N(f)^2.
  \nonumber
\end{align}
It remains now to estimate the last term at the RHS of
\eqref{eq:identity}. Observe that, multiplying \eqref{eq:helmholtz}
by $u$ in the $L^2$-sense and taking the resulting imaginary parts,
we get (see identity \eqref{eq:symmetric2})
\begin{equation}\label{eq:epsilon}
  \epsilon\int|u|^2dx\leq\int|fu|dx.
\end{equation}
On the other hand, taking the real parts we obtain (see identity
\eqref{eq:symmetric})
\begin{equation*}
  \int|\nabla_Au|^2=-\int V|u|^2+\lambda\int|u|^2-\Re\int f\overline
  u.
\end{equation*}
Hence by assumption \eqref{eq:hardy} we have
\begin{equation}\label{eq:epsilon2}
  \int|\nabla_Au|^2\leq C\left(|\lambda|\int|u|^2+\int|fu|\right).
\end{equation}
As a consequence of \eqref{eq:epsilon} and \eqref{eq:epsilon2}, by
\eqref{eq:estimatephi} we can estimate
\begin{align}\label{eq:estepsilon}
  & \left|\epsilon\int u\nabla\phi\cdot\overline{\nabla_Au}\right|
  \leq
  C|\epsilon|^{1/2}\left(|\lambda|\int|u|^2+\int|fu|\right)^{\frac12}
  \left(\int|fu|\right)^{\frac12}
  \\
  &\ \ \ \ \ \ \ \
  \leq C|\epsilon|^{1/2}\int|fu|+
  C\left(|\epsilon\lambda|\int|fu|\int|u^2|\right)^{\frac12}
  \nonumber
  \\
  &\ \ \ \ \ \ \ \
  \leq C(|\epsilon|+|\lambda|)^{\frac12}\int|fu|
  \nonumber
  \\
  &\ \ \ \ \ \ \ \
  \leq
  C(|\epsilon|+|\lambda|)^{\frac12}\cdot|||u|\lambda|^{1/2}|||\cdot N\left(\frac{f}{|\lambda|^{1/2}}\right)
  \nonumber
  \\
  &\ \ \ \ \ \ \ \
  \leq\alpha|||u|\lambda|^{1/2}|||^2+C(\alpha)(|\epsilon|+|\lambda|)
  \left(N\left(\frac{f}{|\lambda|^{1/2}}\right)\right)^2,
  \nonumber
\end{align}
for $\alpha,C(\alpha)>0$. In conclusion, by \eqref{eq:estf},
\eqref{eq:estf2} and \eqref{eq:estepsilon}, for the right-hand side
of \eqref{eq:identity} we have
\begin{align}\label{eq:estRHS}
  & \left|\Re\int
    f\left(\nabla\phi\cdot\overline{\nabla_Au}+\frac12(\Delta\phi)\overline{u}\right)dx
    +\Re\int f\varphi\overline udx
    \pm\epsilon\Im\int u\nabla\phi\cdot\overline{\nabla_A u}dx\right|
  \\
  &\ \
  \leq\alpha\left(|||\nabla_A
  u|||^2+|||u|\lambda|^{1/2}|||^2+\sup_{R>0}\frac1{R^2}\int_{|x|=R}|u|^2d\sigma\right)
  \nonumber
  \\
  &\ \ \ \ \
  +C(\alpha)\left[N(f)^2+(|\epsilon|+|\lambda|)
  \left(N\left(\frac{f}{|\lambda|^{1/2}}\right)\right)^2\right],
  \nonumber
\end{align}
for arbitrary $\alpha>0$.

Our next step is to prove the positivity of the left-hand side of
\eqref{eq:identity}.

\subsubsection{Positivity of the LHS in \eqref{eq:identity}}\label{subsubsec2}

Let us consider the first term. Since $\phi_R$ is radial, we can
exploit the formula
\begin{equation}\label{eq:tang}
  \nabla_AuD^2\phi_R\overline{\nabla_Au}=
  \phi_R''|\nabla_A^ru|^2+\frac{\phi'_R}{|x|}|\nabla_A^\tau u|^2,
\end{equation}
where $\nabla_A^ru=\nabla_Au\cdot x/|x|$ denotes the radial
component of the distorted gradient and $|\nabla_A^\tau u|$ the
modulus of the tangential component, i.e.
\begin{equation*}
  \nabla_A^\tau u\cdot\nabla_A^ru=0,
  \qquad
  |\nabla_A^\tau u|^2=|\nabla_Au|^2-|\nabla_A^ru|^2.
\end{equation*}
By \eqref{eq:tang}, \eqref{eq:estimatephi} and \eqref{eq:varfi},
since $\beta<1/3$ we estimate
\begin{equation}\label{eq:LHS1}
  \int\nabla_AuD^2\phi_R\overline{\nabla_Au}-\int\varphi_R|\nabla_Au|^2
  \geq
  M\int\frac{|\nabla_A^\tau u|^2}{|x|}
  +\frac{1-3\beta}{3}\cdot\frac1R\int_{|x|\leq
  R}|\nabla_Au|^2.
\end{equation}
For the third term, by \eqref{eq:fibi} and \eqref{eq:deltavarphi} we
have
\begin{equation}\label{eq:LHS2}
  \int\left(-\frac14\Delta^2\phi_R+\frac12\Delta\varphi_R\right)|u|^2dx
  =
  \pi|u(0)|^2+\frac{1-2\beta}{4R^2}\int_{|x|=R}|u|^2d\sigma,
\end{equation}
and again this is a positive term. Now we pass to the terms
containing $\partial_rV$ and $B_\tau$. First observe that, by
splitting $\partial_rV=(\partial_rV)_+-(\partial_rV)_-$ and using
\eqref{eq:estimatephi}, \eqref{eq:estimatephi2}, we obtain
\begin{align}\label{eq:LHS3}
  & -\frac12\int\phi'_R(\partial_rV)|u|^2
  \geq
  \frac
  M2\int(\partial_rV)_-|u|^2-\frac{2M+1}{4}\int(\partial_rV)_+|u|^2
  \\
  & \geq\frac
  M2\int(\partial_rV)_-|u|^2-\frac{2M+1}{4}
  \int_0^\infty d\rho\int_{|x|=\rho}(\partial_rV)_+|u|^2d\sigma
  \nonumber
  \\
  & \geq\frac
  M2\int(\partial_rV)_-|u|^2-\frac{2M+1}{4}
  \sup_{R>0}\left(\frac1{R^2}\int_{|x|=R}|u|^2
  d\sigma\right)\||x|^2(\partial_rV)_+\|_{L^1_rL^\infty(S_r)}.
  \nonumber
\end{align}
Analogously, by \eqref{eq:varfidecad} we have
\begin{align}\label{eq:LHS33}
  & -\int\varphi_RV|u|^2\geq C_0(\beta)\int\langle
  x\rangle^{-1}V_-|u|^2-C(\beta)\int\langle
  x\rangle^{-1}V_+|u|^2
  \\
  &\ \ \
  \geq C_0(\beta)\int\langle
  x\rangle^{-1}V_-|u|^2
  \nonumber
  \\
  &\ \ \ \ \ \ -C(\beta)\sup_{R>0}\left(\frac1{R^2}\int_{|x|=R}|u|^2
  d\sigma\right)\|\langle x\rangle^{-1}|x|^2V_+\|_{L^1_rL^\infty(S_r)}.
  \nonumber
\end{align}

The term containing $B_\tau$ does not have sign in principle; hence,
noticing that
\begin{equation*}
  \left|B_\tau\cdot\nabla_Au\right|=|B_\tau|\cdot|\nabla_A^\tau u|,
\end{equation*}
since $B_\tau$ is a tangential vector, we estimate
\begin{align}\label{eq:LHS4}
  & \Im\int_{\R^n}u\phi'_RB_\tau\cdot\overline{\nabla_Au}\,dx
  \geq-\frac{2M+1}{2}\int_{\R^n}|u|\cdot|B_\tau|\cdot|\nabla_A^\tau
  u|\,dx
  \\
  &\ \ \
  \geq-\frac{2M+1}{2}\left(\int\frac{|\nabla_A^\tau
  u|^2}{|x|}\right)^{\frac12}\left(\int_0^{+\infty}d\rho\int_{|x|=\rho}|x|\cdot|u|^2\cdot|B_\tau|^2
  d\sigma\right)^{\frac12}
  \nonumber
  \\
  &\ \ \
  \geq-\frac{2M+1}{2}\left(\int\frac{|\nabla_A^\tau
  u|^2}{|x|}\right)^{\frac12}\left(\sup_{R>0}\frac1{R^2}\int_{|x|=R}|u|^2
  d\sigma\right)^{\frac12}\||x|^{\frac32}B_\tau\|_{L^2_rL^\infty(S_r)}.
  \nonumber
\end{align}
We are ready now to sum \eqref{eq:LHS1}, \eqref{eq:LHS2},
\eqref{eq:LHS3}, \eqref{eq:LHS33}, and \eqref{eq:LHS4}. Due to the
freedom on the choice of $R$ we can take the supremum over $R$ in
\eqref{eq:LHS1}, \eqref{eq:LHS2}. In order to simplify the reading,
let us introduce the following notations:
\begin{equation*}
  a:=\left(\int\frac{|\nabla_A^\tau
  u|^2}{|x|}\right)^{\frac12};
  \qquad
  b:=\left(\sup_{R>0}\frac1{R^2}\int_{|x|=R}|u|^2
  d\sigma\right)^{\frac12}.
\end{equation*}
Moreover, according to assumption \eqref{eq:condition}, we denote
\begin{equation*}
  C_1:=\||x|^{\frac32}B_\tau\|_{L^2_rL^\infty(S_r)};
\end{equation*}
\begin{equation*}
  C_2:=\||x|^2(\partial_rV)_+\|_{L^1_rL^\infty(S_r)};
\end{equation*}
\begin{equation*}
  C_3:=\|\langle x\rangle^{-1}|x|^2V_+\|_{L^1_rL^\infty(S_r)}.
\end{equation*}
Hence we have obtained
\begin{align}\label{eq:LHS5}
  & \int\nabla_AuD^2\phi_R\overline{\nabla_Au}-\int\varphi_R|\nabla_Au|^2
  +\int\left(-\frac14\Delta^2\phi_R+\frac12\Delta\varphi_R\right)|u|^2
  \\
  &
  -\int\left[\frac12\phi'_R(\partial_rV)+\varphi_RV\right]|u|^2
  +\Im\int_{\R^n}u\phi'_RB_\tau\cdot\overline{\nabla_Au}
  \nonumber
  \\
  &
  \geq \frac{1-3\beta}{3}\sup_{R>0}\left(\frac1R\int_{|x|\leq
  R}|\nabla_Au|^2\right)
  +\pi|u(0)|^2
  \nonumber
  \\
  &\ \ \
  +\frac M2\int(\partial_rV)_-|u|^2
  +C_0(\beta)\int\langle x\rangle^{-1}V_-|u|^2
  \nonumber
  \\
  &\ \ \
  +Ma^2
  -\frac{2M+1}{2}C_1ab
  +\frac14[1-2\beta-(2M+1)C_2-4C(\beta)C_3]b^2.
  \nonumber
\end{align}
Then we need to prove that
\begin{equation*}
  +Ma^2
  -\frac{2M+1}{2}C_1ab
  +\frac14[1-2\beta-(2M+1)C_2-4C(\beta)C_3]b^2>0,
\end{equation*}
for any $a,b$. By homogeneity, it is sufficient to prove that
\begin{equation}\label{eq:algebra}
  +Ma^2
  -\frac{2M+1}{2}C_1a
  +\frac14[1-2\beta-(2M+1)C_2-4C(\beta)C_3]>0,
\end{equation}
for any $a$. Since $\beta$ is arbitrary in the definition
\eqref{eq:varfi} of $\varphi$, we can choose
$\beta\in(-\gamma,\gamma)$, for $\gamma>0$ arbitrarily small. As a
consequence also the constant $C(\beta)$ is arbitrarily small (see
\eqref{eq:varfidecad}) Hence we can neglect the terms containing
$\beta$, $C(\beta)$, and \eqref{eq:algebra} is satisfied if
\begin{equation}\label{eq:conditionproof}
  \frac{\left(M+\frac12\right)^2}{M}C_1^2+2\left(M+\frac12\right)C_2<1,
\end{equation}
which in fact coincides with \eqref{eq:condition}. In conclusion, we
have proved that, under assumption \eqref{eq:condition},
\begin{align}\label{eq:LHS6}
  & \int\nabla_AuD^2\phi_R\overline{\nabla_Au}-\int\varphi_R|\nabla_Au|^2
  +\int\left(-\frac14\Delta^2\phi_R+\frac12\Delta\varphi_R\right)|u|^2
  \\
  &
  -\int\left[\frac12\phi'_R(\partial_rV)+\varphi_RV\right]|u|^2
  +\Im\int_{\R^n}u\phi'_RB_\tau\cdot\overline{\nabla_Au}
  +\lambda\int\varphi_R|u|^2
  \nonumber
  \\
  &
  \geq \frac{1-3\beta}{3}\sup_{R>0}\left(\frac1R\int_{|x|\leq
  R}|\nabla_Au|^2\right)
  +\pi|u(0)|^2
  \nonumber
  \\
  &\ \ \
  +\frac M2\int(\partial_rV)_-|u|^2
  +C_0(\beta)\int\langle x\rangle^{-1}V_-|u|^2
  \nonumber
  \\
  &\ \ \
  +\delta
  \left(
  \int\frac{|\nabla_A^\tau u|^2}{|x|}
  +\sup_{R>0}\frac1{R^2}\int_{|x|=R}|u|^2
  d\sigma
  \right)
  +C_0(\beta)\lambda\int\frac{|u|^2}{\langle x\rangle}
  \geq0,
  \nonumber
\end{align}
if $\lambda\geq0$, for a sufficiently small $\delta>0$ depending on
$B_\tau,(\partial_rV)_+$.

At this point, the proof of Theorem \ref{thm:main} is complete by
\eqref{eq:estRHS} and \eqref{eq:LHS6}, up to choose $\alpha$ in
\eqref{eq:estRHS} sufficiently small; actually one needs to notice
that trivially
\begin{equation*}
  |||\lambda^{\frac12}u|||\leq\lambda\int\frac{|u|^2}{\langle
  x\rangle}.
\end{equation*}

\subsection{Proof of Theorem \ref{thm:main2}}\label{sec:4D}
 The proof in dimension $n\geq4$ is completely analogous to the 3D
 case. We first define the following multipliers:
 \begin{equation*}
  \phi_0(x)=\int_0^x\phi_0'(s)\,ds,
\end{equation*}
where
\begin{equation*}
  \phi'_0=\phi'_0(r)=
  \begin{cases}
    M+\frac{n-1}{2n}r,
    \qquad
    r\leq1
    \\
    M+\frac12-\frac1{2nr^{n-1}},
    \qquad
    r>1,
  \end{cases}
\end{equation*}
and $M>0$ is now an arbitrary constant. Observe that $\phi_0$
coincides exactly with the one introduced in the 3D proof. Again, by
scaling we define
\begin{equation*}
  \phi_R(r)=R\phi_0\left(\frac rR\right),
\end{equation*}
and by direct computations we obtain
\begin{equation}\label{eq:fi14d}
  \phi'_R=\phi'_0\left(\frac rR\right)=
  \begin{cases}
    M+\frac{n-1}{2n}\cdot\frac rR,
    \qquad
    r\leq R
    \\
    M+\frac12-\frac{R^{n-1}}{2nr^{n-1}},
    \qquad
    r>R,
  \end{cases}
\end{equation}
\begin{equation}\label{eq:fi24d}
  \phi''_R=
  \begin{cases}
    \frac{n-1}{2n}\cdot\frac 1R,
    \qquad
    r\leq R
    \\
    \frac{n-1}{2n}\cdot\frac{R^{n-1}}{r^n},
    \qquad
    r>R;
  \end{cases}
\end{equation}
\begin{equation}\label{eq:deltafi4D}
  \Delta\phi_R(r)=
  \begin{cases}
    \frac{n-1}{2R}+\frac{M(n-1)}{r},
    \qquad
    r\leq R
    \\
    \frac{(2M+1)(n-1)}{2r},
    \qquad
    r>R;
  \end{cases}
\end{equation}
moreover, the bilaplacian gives now
\begin{align}\label{eq:bifi4D}
  \Delta^2\phi_R(r)=
  &
  -\frac{n-1}{2R^2}\delta_{|x|=R}
  -M\frac{(n-1)(n-3)}{r^3}\chi_{[0,R]}
  \\
  &
  -\left(M+\frac12\right)\frac{(n-1)(n-3)}{r^3}\chi_{(R,+\infty)},
  \nonumber
\end{align}
in the distributional sense, where $\chi$ denotes the characteristic
function. Observe that also here the bi-laplacian is negative; the
terms involving the characteristic functions turn out to be crucial
in view to improve the 3D condition \eqref{eq:condition} in
\eqref{eq:condition2}. Moreover let us notice that, as in 3D case,
$\phi'_R,\phi''_R,\Delta\phi_R\geq$ and
\begin{equation}\label{eq:estimatefi4d}
  \sup_{r\geq0}\phi'_R(r)\leq M+\frac12,
  \quad
  \sup_{r\geq0}\phi''_R(r)\leq\frac{n-1}{2nR},
  \quad
  \sup_{r>0}\Delta\phi_R(r)\leq\frac{(2M+1)(n-1)}{2r},
\end{equation}
\begin{equation}\label{eq:estimatefi4d2}
  \inf_{r\geq0}\phi'_R\geq M.
\end{equation}
As in \eqref{eq:varfi}, we define
\begin{equation}\label{eq:varfi4d}
  \varphi_R(r)=
  \begin{cases}
    \frac\beta R,
    \quad
    r\leq R
    \\
    \frac\beta r,
    \quad
    r>R
  \end{cases}
\end{equation}
for some $\beta<(n-1)/2n$. Obviously \eqref{eq:varfidecad} is still
true. Moreover we have
\begin{equation}\label{eq:deltavarfi4d}
  \Delta\varphi_R(r)=
  -\frac{\beta}{R^2}\delta_{|x|=R}-\frac{\beta(n-3)}{r^3}\chi_{(R,+\infty)}.
\end{equation}
From now on the proof is almost the same as in the 3D case.

\subsubsection{Estimate of the RHS in \eqref{eq:identity}}
This stuff is identical as in subsection \ref{subsubsec1}. Actually,
with the same argument, by \eqref{eq:estimatefi4d},
\eqref{eq:varfidecad} and assumption \eqref{eq:hardy2} we obtain
\eqref{eq:estRHS}, exactly as in the 3D case. We omit further
details.

\subsubsection{Positivity of the LHS in \eqref{eq:identity}}
Here we have a difference with respect to the 3D case. Indeed, the
two terms involving the characteristic functions in
\eqref{eq:bifi4D} have to be exploited in order to get positivity
with optimal conditions on the potentials.

Let us start again by formula \eqref{eq:tang}; by this,
\eqref{eq:fi14d}, \eqref{eq:fi24d} and \eqref{eq:varfi4d} we easily
see that
\begin{align}\label{eq:LHS14D}
  & \int\nabla_AuD^2\phi_R\overline{\nabla_Au}-\int\varphi_R|\nabla_Au|^2
  \\
  &\ \ \ \ \ \
  \geq
  M\int\frac{|\nabla_A^\tau u|^2}{|x|}
  +\left(\frac{n-1}{2n}-\beta\right)\cdot\frac1R\int_{|x|\leq
  R}|\nabla_Au|^2,
  \nonumber
\end{align}
for any $R>0$. This terms is positive since $\beta<(n-1)/2n$. By
\eqref{eq:bifi4D} and \eqref{eq:deltavarfi4d} we get
\begin{align}\label{eq:termini}
  & \frac14\Delta^2\phi_R-\frac12\Delta\varphi_R
  = \Delta\left(\frac14\Delta\phi_R-\frac12\varphi_R\right)
  \\
  &
  =-\frac{n-1-4\beta}{8}\cdot\frac1{R^2}\delta_{|x|=R}
  -\frac{M(n-1)(n-3)}{4r^3}\chi_{[0,R]}
  \nonumber
  \\
  &
  \ \ -\frac{(2M+1)(n-1)(n-3)-4\beta(n-3)}{8r^3}\chi_{(R,+\infty)};
  \nonumber
\end{align} As a consequence
\begin{align}\label{eq:LHS24D}
  &
  \int\left(-\frac14\Delta^2\phi_R+\frac12\Delta\varphi_R\right)|u|^2
  \\
  &
  \geq
  \frac{n-1-4\beta}{8R^{2}}\int_{|x|=R}|u|^2d\sigma
  +\left(\frac{M(n-1)(n-3)}{4}-K(\beta)\right)\int\frac{|u|^2}{|x|^3}dx,
  \nonumber
\end{align}
with $0\leq K(\beta)\to0$ as $\beta\to0$; this term is positive, up
tho choose $\beta$ small enough. As in the previous case, we now
observe that, by \eqref{eq:estimatefi4d}
\begin{align}\label{eq:LHS34D}
  & -\frac12\int\phi'_R(\partial_rV)|u|^2
  \geq
  \frac
  M2\int(\partial_rV)_-|u|^2-\frac{2M+1}{4}\int(\partial_rV)_+|u|^2
  \\
  & \geq\frac
  M2\int(\partial_rV)_-|u|^2
  -\frac{2M+1}{4}
  \||x|^3(\partial_rV)_+\|_{L^\infty}\int\frac{|u|^2}{|x|^3}dx,
  \nonumber
\end{align}
\begin{align}\label{eq:LHS334D}
  & -\int\varphi_RV|u|^2\geq C_0(\beta)\int\langle
  x\rangle^{-1}V_-|u|^2-C(\beta)\int\langle
  x\rangle^{-1}V_+|u|^2
  \\
  &\ \ \
  \geq C_0(\beta)\int\langle
  x\rangle^{-1}V_-|u|^2
  -C(\beta)\|\langle x\rangle^{-1}|x|^3V_+\|_{L^\infty}\int\frac{|u|^2}{|x|^3}dx.
  \nonumber
\end{align}
With a similar computation, for the term involving $B_\tau$ we
estimate
\begin{align}\label{eq:LHS44D}
  & \Im\int_{\R^n}u\phi'_RB_\tau\cdot\overline{\nabla_Au}\,dx
  \geq-\frac{2M+1}{2}\int_{\R^n}|u|\cdot|B_\tau|\cdot|\nabla_A^\tau
  u|\,dx
  \\
  &\ \ \
  \geq-\frac{2M+1}{2}\left(\int\frac{|\nabla_A^\tau
  u|^2}{|x|}\right)^{\frac12}\left(\int\frac{|u|^2}{|x|^3}\right)^{\frac12}
  \||x|^2B_\tau\|_{L^\infty}.
  \nonumber
\end{align}
Now we can sum \eqref{eq:LHS14D}, \eqref{eq:LHS24D},
\eqref{eq:LHS34D}, \eqref{eq:LHS334D} and \eqref{eq:LHS44D}, taking
the supremum over $R$; we denote by
\begin{equation*}
  a:=\left(\int\frac{|\nabla_Au|^2}{|x|}\right)^{\frac12};
  \qquad
  b:=\left(\int\frac{|u|^2}{|x|^3}\right)^{\frac12},
\end{equation*}
and according to assumption \eqref{eq:condition2}
\begin{equation*}
  \||x|^2B_\tau\|_{L^\infty}\leq C_1,
\end{equation*}
\begin{equation*}
  \||x|^3(\partial_rV)_+\|_{L^\infty}\leq C_2,
\end{equation*}
\begin{equation*}
  \|\langle x\rangle^{-1}|x|^3V_+\|_{L^\infty}:=C_3<\infty.
\end{equation*}
We obtain
\begin{align}\label{eq:LHS54D}
  & \int\nabla_AuD^2\phi_R\overline{\nabla_Au}-\int\varphi_R|\nabla_Au|^2
  +\int\left(-\frac14\Delta^2\phi_R+\frac12\Delta\varphi_R\right)|u|^2
  \\
  &
  -\int\left[\frac12\phi'_R(\partial_rV)+\varphi_RV\right]|u|^2
  +\Im\int_{\R^n}u\phi'_RB_\tau\cdot\overline{\nabla_Au}
  \nonumber
  \\
  &
  \geq \left(\frac{n-1}{2n}-\beta\right)|||\nabla_Au|||^2
  +\frac{n-1-4\beta}{8}\sup_{R>0}\left(\frac{1}{R^{2}}\int_{|x|=R}|u|^2d\sigma\right)
  \nonumber
  \\
  &
  \ \ \ +\frac M2\int(\partial_rV)_-|u|^2
  +C_0(\beta)\int\langle x\rangle^{-1}V_-|u|^2
  \nonumber
  \\
  &
  \ \ \ +Ma^2-\left(M+\frac12\right)C_1ab
  \nonumber
  \\
  &
  \ \ \ +\frac14\left[M(n-1)(n-3)-(2M+1)C_2-4C(\beta)C_3-4K(\beta)\right]b^2.
  \nonumber
\end{align}
It remains to prove that
\begin{align*}
  & Ma^2-\left(M+\frac12\right)C_1ab
  \\
  &
  \ \ \ +\frac14\left[M(n-1)(n-3)-(2M+1)C_2-4C(\beta)C_3-4K(\beta)\right]b^2>0,
\end{align*}
for any $a,b$. Again, by homogeneity it is sufficient to show that
\begin{align*}
  & Ma^2-\left(M+\frac12\right)C_1a
  \\
  &
  \ \ \ +\frac14\left[M(n-1)(n-3)-(2M+1)C_2-4C(\beta)C_3-4K(\beta)\right]>0,
\end{align*}
for any $a$. This is satisfied if
\begin{equation*}
  \frac{1}{(n-1)(n-3)}\left[\frac{\left(M+\frac12\right)^2}{M^2}C_1^2
  +2\frac{\left(M+\frac12\right)}{M}C_2\right]<1.
\end{equation*}
Finally, notice that
\begin{equation*}
  \inf_{M>0}\frac{\left(M+\frac12\right)^2}{M^2}
  =\inf_{M>0}\frac{\left(M+\frac12\right)}{M}=1
\end{equation*}
and the infimum is reached in the limit as $M\to\infty$. Since $M$
is arbitrary in the definition of $\phi_R$ we can optimize in terms
of $C_1,C_2$, and conclude that the last condition is
\begin{equation}\label{eq:conditionfinal}
  C_1^2+2C_2<(n-1)(n-3),
\end{equation}
which is in fact assumption \eqref{eq:condition2}. In conclusion,
assumption \eqref{eq:condition2} implies that
\begin{align}\label{eq:LHS64D}
  & \int\nabla_AuD^2\phi_R\overline{\nabla_Au}-\int\varphi_R|\nabla_Au|^2
  +\int\left(-\frac14\Delta^2\phi_R+\frac12\Delta\varphi_R\right)|u|^2
  \\
  &
  -\int\left[\frac12\phi'_R(\partial_rV)+\varphi_RV\right]|u|^2
  +\Im\int_{\R^n}u\phi'_RB_\tau\cdot\overline{\nabla_Au}
  +\lambda\int\varphi_R|u|^2
  \nonumber
  \\
  &
  \geq \left(\frac{n-1}{2n}-\beta\right)|||\nabla_Au|||^2
  +\frac{n-1}{8}\sup_{R>0}\left(\frac{1}{R^{2}}\int_{|x|=R}|u|^2d\sigma\right)
  \nonumber
  \\
  &
  \ \ \ \int(\partial_rV)_-|u|^2
  +C_0(\beta)\int\langle x\rangle^{-1}V_-|u|^2
  \nonumber
  \\
  &
  \ \ \ +\delta
  \left(
  \int\frac{|\nabla_A^\tau u|^2}{|x|}
  +\int\frac{|u|^2}{|x|^3}dx
  \right)
  +C_0(\beta)\lambda\int\frac{|u|^2}{\langle x\rangle}
  \geq0,
  \nonumber
\end{align}
if $\lambda\geq0$, for a sufficiently small $\delta>0$ depending on
$B_\tau,(\partial_rV)_+$. The proof of Theorem \ref{thm:main2} is
complete by \eqref{eq:estRHS} and \eqref{eq:LHS64D}, up tho choose
$\alpha>0$ sufficiently small in \eqref{eq:estRHS}.

\end{document}